\numberwithin{equation}{section}
\providecommand{\U}[1]{\protect\rule{.1in}{.1in}}
\providecommand{\U}[1]{\protect \rule{.1in}{.1in}}
\newtheorem{theorem}{Theorem}[section]
\newtheorem{definition}[theorem]{Definition}
\newtheorem{lemma}[theorem]{Lemma}
\newtheorem{proposition}[theorem]{Proposition}
\newtheorem{remark}[theorem]{Remark}
\newtheorem{assumption}[theorem]{Assumption}
\newenvironment{proof}[1][Proof]{\noindent \textbf{#1.} }{\  \rule{0.5em}{0.5em}}
\def \E{\mathbf{E}}
\def \P{\mathbf{P}}
\title{Mean-field BSDEs with non-Lipschitz coefficients and double mean reflections}
\begin{document}

	\author{ 	Hanwu Li \thanks{Research Center for Mathematics and Interdisciplinary Sciences, Shandong University, Qingdao 266237, Shandong, China. lihanwu@sdu.edu.cn.}
	\thanks{Frontiers Science Center for Nonlinear Expectations (Ministry of Education), Shandong University, Qingdao 266237, Shandong, China.}
    \and Jin Shi \thanks{Research Center for Mathematics and Interdisciplinary Sciences, Shandong University, Qingdao 266237, Shandong, China. }}
\maketitle
\begin{abstract}
     The present paper is devoted to the study of mean-field backward stochastic differential equations ($\text{MFBSDEs}$) with double mean reflections whose generators are not Lipschitz continuous. With the help of the Skorokhod problem and some a priori estimates for MFBSDEs, we establish the existence and uniqueness results for doubly mean reflected $\text{MFBSDEs}$.
	\end{abstract}
    
    \textbf{Key words}: Doubly mean reflected MFBSDEs, Non-Lipschitz coefficients, Skorokhod problem 

\textbf{MSC-classification}: 60H10
\section{Introduction}
 In recent years, motivated by the models of large stochastic particle systems with mean field interactions, Buckdahn, Djehiche, Li and Peng \cite{BDLP} studied the mean-field problem in a purely stochastic approach and deduced a new kind of BSDEs, called the mean-field BSDEs (MFBSDEs), which evolve according to the following equation:
 $$Y_t=\xi+\int_{t}^{T}f(s,Y_s,\P_{Y_s},Z_s,\P_{Z_s})ds- \int_t^T Z_s dB_s.$$ Roughly speaking, the generators of such type of BSDEs depend on the distributions of the solutions. They proved the existence and uniqueness of MFBSDEs when the generator $f$ is uniformly Lipschitz continuous in $(y,z)$ and the terminal condition $\xi$ is square-integrable.  Since then, the theory of MFBSDEs has made a rapid development due to its importance in applications, for example, in stochastic control (see, \cite{BDL,DTT,LJ}), partial differential equations (see, \cite{BLP,HHTW,LJ2}). Based on the  mean field theory and the reflected BSDE,  Li \cite{LJ1} studied  the following constrained MFBSDE
\begin{equation*}
 Y_t=\xi+\int_t^T \E\left[f(s,y_s,z_s,Y_s)\right]|_{y=Y_s,z=Z_s}ds-\int_t^T Z_s dB_s+(K_T-K_t),
\end{equation*}
 where the first component of the solution is forced to remain above a specified process $L$, referred to as the obstacle. That is, for any $t\in[0,T]$, we should make sure that  $Y_t\geq L_t$. The  objective is to determine the minimal solution $Y$, which is uniquely characterized by the Skorokhod condition 
 $$\int_{0}^{T} (Y_t-L_t) dK_t=0,$$ 
 where $K$ is a nondecreasing predictable process and provides a corrective force to maintain the solution $Y$ above the obstacle $L$. 
 Different from the pointwise constraints on the solutions, Hu, Moreau, and Wang \cite{HMW} studied the MFBSDEs with mean reflection, where the constraints are made on the distribution of the solution. More precisely, for any $t\in[0,T]$, the solution $Y$ is required to satisfy 
 $\mathbf{E}\left[l\left(t,Y_t\right)\right]\geq 0$. Here, $l$ is a nonlinear loss function, which is closely related to the risk measures (see \cite{BEH}). It is worth pointing out that for the mean reflected case, the force aiming to push the solution upward should be deterministic.  Recently, Li and Shi \cite{LS} investigated the doubly reflected case. 
 For more details on the mean reflected BSDEs, we refer the readers to \cite{CHM,FS,HHLLW,L,LU} and the references therein.   

It should be pointed out that the generators in all the above mentioned papers satisfy the Lipschitz assumption with respect to $y$, which may be somewhat restrictive in practical applications. For example, the utility function of the Kreps-Porteus’s type is not Lipschitz in $y$ (see \cite{DE}). 
Therefore, many works have been devoted to weakening the Lipschitz assumption about the $y$-term. To name a few, we may refer to \cite{BJ,FJ,M}. Especially, Mao \cite{M} proposed the following Mao's condition
$$\left|f(t,y,z)-f(t,y',z')\right|^2\leq \rho(\left|y-y'\right|^2)+\lambda^2\left|z-z'\right|^2,$$
where $\rho:[0,\infty)\rightarrow [0,\infty)$ is a continuous, non-decreasing and concave function satisfying certain integrable condition. 
Under the Mao condition, Cui and Zhao  studied the mean reflected BSDEs and MFBSDEs  in \cite{CZ} and \cite{CZ1}, respectively. They obtained the existence and uniqueness by using the Picard iteration method and the representation for the solutions to the Skorokhod problems.  

In this paper, we consider the existence and uniqueness of the solution to the following MFBSDE with double mean reflections:
\begin{equation}\label{BSDEDMR}
\begin{cases}
Y_t=\xi+\int_t^T f(s,Y_s,\P_{Y_s},Z_s,\P_{Z_s})ds-\int_t^T Z_s dB_s+K_T-K_t,\\
\mathbf{E}[L(t,Y_t)]\leq 0\leq \mathbf{E}[R(t,Y_t)], \ 0\leq t\leq T, \\
K_t=K^R_t-K^L_t,\ K^R,K^L\in I[0,T], \\
\int_0^T \mathbf{E}[R(t,Y_t)]dK_t^R=\int_0^T \mathbf{E}[L(t,Y_t)]dK^L_t=0,
\end{cases}
\end{equation}
where the generator $f$ satisfies Mao's condition. For this purpose, we construct a Picard iteration sequence and show that the triple of limit processes is the solution for \eqref{BSDEDMR}. In contrast to the mean reflection framework discussed in \cite{CZ}, the present study addresses the case of double mean reflections, wherein the governing equation is a mean-field backward stochastic differential equation (MFBSDE). Compared with \cite{CZ}, we may establish the global estimates directly by employing a priori estimates for MFBSDEs as developed in \cite{LLZ}. Concurrently, by integrating the oscillation estimate and the continuity estimate for the solution to the nonlinear Skorokhod problem presented in \cite{Li}, we derive some uniform estimates for the Picard iteration sequence. This approach enables us to demonstrate that the sequence is a Cauchy sequence under appropriate norm, thereby ensuring the well-posedness of Equation \eqref{BSDEDMR}.

This paper is organized as follows. In Section 2, we recall some basic results about the nonlinear Skorokhod problem. In Section 3, we first give several crucial a priori estimates and then prove the main result in this paper. Some a priori estimates for MFBSDEs are given in the Appendix.

\section{Preliminaries}
Given a fixed time horizon $T>0$, consider a complete probability space  $(\Omega, \mathcal{F}, \mathbf{P})$, on which $B=(B_t)_{0\leq t\leq T}$ is a standard $d$-dimensional Brownian motion. Let $(\mathcal{F}_t)_{0\leq t\leq T}$ be the natural filtration generated by $B$ augmented with all $\mathbb{P}$-null sets. Let us first introduce some frequently used notation in this paper. 
\begin{itemize}
    \item $\mathcal{L}^2(\mathcal{F}_t)$: the set of real-valued $\mathcal{F}_t$-measurable square integrable random variables for a fixed $t\in[0,T]$.
    \item $\mathcal{S}^2[u,v]$: the set of real-valued, continuous and progressively measurable processes $Y$ satisfying 
    $$\left \|Y  \right \|_{\mathcal{S}^2}:=\mathbf{E}\left[\sup_{t\in[u,v]}\left|Y_t\right|^{2}\right]^{\frac{1}{2}}<\infty.$$
    \item$\mathcal{H}^2[u,v]$: the set of $\mathbb{R}^d$-valued progressively measurable processes $Z$ satisfying 
    $$\left \|Z  \right \|_{\mathcal{H}^2}:=\mathbf{E}\left[\int_{u}^{v}\left|Z_t\right|^{2}dt\right]^{\frac{1}{2}}<\infty.$$
    
    \item $C\left[u,v\right]$: the set of continuous functions from $\left[u,v\right]$ to $\mathbb{R}$.

	\item  $BV\left[u,v\right]$: the set of functions $K\in C\left[u,v\right]$ with $K_u=0$ and $K$ is of bounded variation on $\left[u,v\right]$.
	
	\item  $I\left[u,v\right]$: the set of functions in $C\left[u,v\right]$ starting from the origin which is nondecreasing.
\end{itemize}
When the interval $[u,v]=[0,T]$, we always omit the time index. Throughout this paper, for a given set of parameters $\alpha$, $P(\alpha )$ will denote a constant only depending on these parameters and may change from line to line.

We first recall some basic results about the Skorokhod problem, which will be the building block to construct the solutions to doubly mean reflected MFBSDEs.
\begin{definition}\label{def1}
Let $s\in C[0,T]$,  and $l,r:[0,T] \times \mathbb{R}\rightarrow \mathbb{R}$ be two functions with $l\leq r$.  A pair of functions $(x,K)\in C[0,T]\times BV[0,T]$ is called a solution to the  Skorokhod problem for $s$ with nonlinear constraints $l,r$ ($(x,K)=\mathbb{SP}_l^r(s)$ for short) if 
\begin{itemize}
\item[(i)] $x_t=s_t+K_t$;
\item[(ii)] $l(t,x_t)\leq 0\leq r(t,x_t)$;
\item[(iii)]  $K_{0-}=0$ and $K$ has the decomposition $K=K^r-K^l$, where $K^r,K^l$ are nondecreasing functions satisfying  
\begin{align}
\int_0^{T} I_{\{l(s,x_s)<0\}}dK^l_s=0, \  \int_0^{T} I_{\{r(s,x_s)>0\}}dK^r_s=0.
\end{align}
\end{itemize}
\end{definition}

To obtain the result of existence and uniqueness for the Skorokhod problem, we propose the following assumption for the loss functions $l,r$.
\begin{assumption}\label{asslr}
\begin{itemize}
\item[(i)] For each fixed $x\in\mathbb{R}$, $l(\cdot,x),r(\cdot,x)\in C[0,T]$.
\item[(ii)] For any fixed $t\in [0,T]$, $l(t,\cdot)$, $r(t,\cdot)$ are strictly increasing.
\item[(iii)] There exist two positive constants $0<c<C<\infty$, such that for any $t\in [0,T]$ and $x,y\in \mathbb{R}$,
\begin{align*}
&c|x-y|\leq |l(t,x)-l(t,y)|\leq C|x-y|,\\
&c|x-y|\leq |r(t,x)-r(t,y)|\leq C|x-y|.
\end{align*} 
\item[(iv)] $\inf_{(t,x)\in[0,T]\times\mathbb{R}}(r(t,x)-l(t,x))>0$.
\end{itemize}
\end{assumption}
\begin{theorem}[\cite{Li}]\label{SP}
Suppose that $l,r$ satisfy Assumption \ref{asslr}. For any given $s\in C[0,T]$, there exists a unique pair of solution to the Skorokhod problem $(x,K)=\mathbb{SP}_l^r(s)$.
\end{theorem}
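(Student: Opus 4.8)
The plan is to reduce the nonlinear Skorokhod problem to a classical two-sided reflection between continuous, time-dependent barriers, then to treat existence by approximation and uniqueness by an energy estimate.

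First I would invert the loss functions. For fixed $t$, Assumption \ref{asslr}(ii)--(iii) imply that $l(t,\cdot)$ and $r(t,\cdot)$ are strictly increasing homeomorphisms of $\mathbb{R}$ (the lower bound $c|x-y|\le|l(t,x)-l(t,y)|$ forces $l(t,\cdot)$ to be onto). Hence there are unique points $\beta_t,\alpha_t$ with $l(t,\beta_t)=0$ and $r(t,\alpha_t)=0$, and condition (ii) of Definition \ref{def1} becomes $\alpha_t\le x_t\le\beta_t$, while $\{l(s,x_s)<0\}=\{x_s<\beta_s\}$ and $\{r(s,x_s)>0\}=\{x_s>\alpha_s\}$. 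Using the continuity of $l(\cdot,x),r(\cdot,x)$ together with the uniform Lipschitz bound in $x$, the estimate $c|\beta_t-\beta_{t_0}|\le|l(t,\beta_t)-l(t,\beta_{t_0})|=|l(t_0,\beta_{t_0})-l(t,\beta_{t_0})|\to0$ shows $\alpha,\beta\in C[0,T]$. Finally Assumption \ref{asslr}(iv), say $r-l\ge\delta>0$, gives a uniform gap: evaluating $r-l$ at $x=\beta_t$ and using that $r(t,\alpha_t)=0$ is $C$-Lipschitz yields $C(\beta_t-\alpha_t)\ge r(t,\beta_t)\ge\delta$, so $\beta_t-\alpha_t\ge\delta/C>0$ for all $t$. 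Thus $(x,K)=\mathbb{SP}_l^r(s)$ is exactly a solution of the two-sided Skorokhod problem of keeping $x=s+K$ inside the tube $[\alpha_t,\beta_t]$, with $K=K^r-K^l$ and $dK^r$ (resp. $dK^l$) supported on $\{x=\alpha\}$ (resp. $\{x=\beta\}$).

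For existence I would build $x$ by approximation. Recall the explicit one-sided maps: upward reflection at the lower barrier, $\Gamma^\alpha(\psi)_t=\psi_t+\sup_{u\le t}(\alpha_u-\psi_u)^+$, and downward reflection at the upper barrier, $\Lambda^\beta(\psi)_t=\psi_t-\sup_{u\le t}(\psi_u-\beta_u)^+$. One then either iterates the alternating scheme $x^{n+1}=\Lambda^\beta(\Gamma^\alpha(x^n))$ from $x^0=s$, or uses the penalized family $x^n_t=s_t+n\int_0^t(\alpha_u-x^n_u)^+du-n\int_0^t(x^n_u-\beta_u)^+du$ (each $x^n$ existing uniquely by a contraction argument on $C[0,T]$). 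In either case one derives a uniform sup-bound on $x^n$ and a uniform bound on the total variation of the push terms $K^{n,r},K^{n,l}$, extracts via Arzel\`a--Ascoli and Helly a limit $(x,K^r,K^l)$, and verifies in the limit that $\alpha\le x\le\beta$ together with the flatness of $K^r$ off $\{x=\alpha\}$ and of $K^l$ off $\{x=\beta\}$.

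Uniqueness is the cleanest step. If $(x,K)$ and $(\tilde x,\tilde K)$ both solve the problem, then $g:=x-\tilde x=K-\tilde K$ is continuous, of bounded variation, with $g_0=0$, so for every $t$ one has $g_t^2=2\int_0^t g_s\,d(K-\tilde K)_s$. Splitting $d(K-\tilde K)=dK^r-dK^l-d\tilde K^r+d\tilde K^l$ and using the supports (on $\{x=\alpha\}$ one has $x\le\tilde x$, on $\{x=\beta\}$ one has $x\ge\tilde x$, and symmetrically for the tilded terms) shows that each of the four integrals is $\le0$. Hence $g_t^2\le0$, giving $x\equiv\tilde x$, whence $K=x-s=\tilde K$; and since $\{x=\alpha\}$ and $\{x=\beta\}$ are disjoint, the decomposition $K^r-K^l$ is unique as well. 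The main obstacle is the existence step, specifically the uniform total-variation bound on the reflecting terms and the equicontinuity of $x^n$, because the barriers $\alpha,\beta$ are merely continuous and need not be of bounded variation. This is exactly where the uniform gap $\inf_t(\beta_t-\alpha_t)\ge\delta/C>0$ enters: comparing $x^n$ to an interior curve lying at distance at least $\delta/(2C)$ from both barriers controls the mass that can be pushed, and an oscillation estimate (as in \cite{Li}) upgrades this to the equicontinuity needed to pass to the limit, after which the verification of the Skorokhod conditions is routine.
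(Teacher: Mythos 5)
The paper does not prove this theorem: it is imported wholesale from \cite{Li}, so there is no internal proof to compare against. Judged on its own terms, your strategy is sound, and your linearization is in fact the same one that underlies the two results the paper does quote from \cite{Li}: in your notation $\phi_t=\beta_t-s_t$ and $\psi_t=\alpha_t-s_t$, so Proposition \ref{oscillation} is exactly an oscillation bound through your barriers, and Proposition \ref{continuity} is the Lipschitz dependence of the reduced two-sided reflection on its data. Your preliminary steps are correct: the lower Lipschitz bound plus strict monotonicity gives surjectivity of $l(t,\cdot),r(t,\cdot)$, hence well-defined barriers, their continuity, and the uniform gap $\beta_t-\alpha_t\ge\delta/C$; and the uniqueness argument via $g_t^2=2\int_0^t g_u\,d(K_u-\tilde K_u)$ with the four sign checks on the supports is complete, including the recovery of $K^r,K^l$ from $K$ because the two contact sets are disjoint. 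The one place your write-up is not yet a proof is existence: you correctly identify the obstruction (merely continuous, possibly non-BV barriers, so the total-variation and equicontinuity bounds for the approximants are the real work) but only gesture at how the gap closes it. That step is genuinely nontrivial, though it is the classical Skorokhod problem in a time-dependent interval and can be finished by the explicit double-reflection formula or the penalization estimates you sketch. One definitional caveat: Definition \ref{def1} only imposes $K_{0-}=0$, so if $l(0,s_0)>0$ or $r(0,s_0)<0$ the solution must jump at $t=0$ into the tube; your uniqueness computation should first note that the flatness conditions pin $x_0$ to the projection of $s_0$ onto $[\alpha_0,\beta_0]$, so that $g_0=0$ still holds.
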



The following proposition provides the continuous dependence of the solution to the Skorokhod problem with respect to the input function $s$ and the loss functions $l,r$.
\begin{proposition}[\cite{Li}]\label{continuity}
Suppose that $(l^i,r^i)$ satisfy Assumption $\ref{asslr}$, $i=1,2$.
Given  $s^i\in C[0,T]$, let $(x^i,K^i)$ be the solution to the Skorokhod problem $\mathbb{SP}_{l^i}^{r^i}(s^i)$. Then, we have
\begin{equation}\label{diffk}
\sup_{t\in[0,T]}\left|K^1_t-K^2_t\right|
\leq \frac{C}{c}\sup_{t\in[0,T]}\left|s^1_t-s^2_t\right|+\frac{1}{c}(L_T\vee R_T),
\end{equation}
where
\begin{align*}
\bar{L}_T=\sup_{(t,x)\in[0,T]\times \mathbb{R}}\left|{l}^1(t,x)-{l}^2(t,x)\right|,\
\bar{R}_T=\sup_{(t,x)\in[0,T]\times \mathbb{R}}\left|{r}^1(t,x)-{r}^2(t,x)\right|.
\end{align*}
\end{proposition}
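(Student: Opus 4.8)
The plan is to prove the continuity estimate \eqref{diffk} by directly comparing the two Skorokhod solutions and extracting bounds on the increments of the difference $K^1-K^2$ from the complementarity (Skorokhod) conditions.  The estimate \eqref{diffk} has two qualitatively different sources of error: the discrepancy $\sup_t|s^1_t-s^2_t|$ between the inputs, which gets amplified by the conditioning constant $C/c$, and the discrepancy $\bar L_T \vee \bar R_T$ between the loss functions, which enters through the factor $1/c$.  My first step would be to fix an arbitrary time $t\in[0,T]$ and aim to bound $K^1_t-K^2_t$; by symmetry (swapping the roles of the two solutions) it suffices to get a one-sided bound and then take suprema.

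The \textbf{key mechanism} is the following.  Since $K^i=K^{i,r}-K^{i,l}$ is of bounded variation, the difference $K^1-K^2$ is driven at each instant either by the upper-reflection terms or the lower-reflection terms, and on the support of each reflecting measure the state $x^i$ sits exactly on the corresponding boundary.  Concretely, on $\{dK^{1,r}>0\}$ we have $r^1(t,x^1_t)=0$, and I would compare this with $r^2(t,x^2_t)\ge 0$, using the Lipschitz lower bound $c|x^1_t-x^2_t|\le|r^2(t,x^1_t)-r^2(t,x^2_t)|$ together with the closeness $|r^1(t,x^1_t)-r^2(t,x^1_t)|\le \bar R_T$.  This converts the boundary relations into a pointwise inequality of the form $c\,(x^1_t-x^2_t)\le \bar R_T$ (up to signs) precisely where $K^1-K^2$ is increasing through its $r$-component; an analogous inequality holds on the support of the $l$-components.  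Recalling that $x^i_t = s^i_t + K^i_t$, so that $x^1_t-x^2_t = (s^1_t-s^2_t)+(K^1_t-K^2_t)$, these boundary inequalities let me control the variation of $K^1-K^2$ by the input gap plus the loss-function gap.

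\textbf{Carrying this out}, I would integrate (or sum, in a discretized version) the increments of $K^1-K^2$ against their own reflecting structure, using that each component only acts when its constraint is active, and then invoke the uniform Lipschitz bounds from Assumption \ref{asslr}(iii) in both directions.  The upper Lipschitz constant $C$ enters when I must convert a bound on $|r^i(t,\cdot)|$ back into a bound on the state displacement coming from the input $s$, producing the factor $C/c$ multiplying $\sup_t|s^1_t-s^2_t|$, while the bare $1/c$ in front of $\bar L_T\vee\bar R_T$ comes from inverting a single evaluation of the Lipschitz lower bound.  Taking the supremum over $t$ and combining the two symmetric one-sided estimates would then yield \eqref{diffk}.

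The \textbf{main obstacle} I anticipate is the bookkeeping that keeps the $r$-reflection and $l$-reflection contributions separated: because $K^1-K^2$ mixes four monotone components, a naive triangle-inequality estimate would double-count and lose the sharp constants.  The cleanest route is probably to adapt the explicit representation of the Skorokhod map (the supremum/infimum formula underlying Theorem \ref{SP}), which expresses $K$ directly in terms of $s$, $l$, and $r$; the continuity of the map then reduces to the Lipschitz continuity of that representation in its three arguments, and the constants $C/c$ and $1/c$ fall out of the Lipschitz constants of $l,r$ under Assumption \ref{asslr}(iii).  Since this proposition is quoted from \cite{Li}, I expect the actual proof to rely on exactly such a representation rather than on the ad hoc integration sketched above.
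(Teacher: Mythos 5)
First, a caveat on the comparison itself: the paper does not prove Proposition \ref{continuity} at all --- it is imported verbatim from \cite{Li} --- so there is no in-paper proof to measure your attempt against, and I can only judge the proposal on its own terms. Your key mechanism is the correct one. On the support of $dK^{1,r}$ the complementarity condition forces $r^1(t,x^1_t)=0$ (that support is contained in the closed set $\{t:r^1(t,x^1_t)=0\}$, since $\{t:r^1(t,x^1_t)>0\}$ is open and $dK^{1,r}$-null), while $r^2(t,x^2_t)\ge 0$; hence $r^2(t,x^1_t)\le r^1(t,x^1_t)+\bar R_T=\bar R_T$, and the lower Lipschitz bound in Assumption \ref{asslr}(iii) gives $c\,(x^1_t-x^2_t)\le \bar R_T$ whenever $x^1_t>x^2_t$. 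The symmetric computation on the support of $dK^{2,l}$ gives $c\,(x^1_t-x^2_t)\le \bar L_T$. Writing $x^i=s^i+K^i$ then yields, at every such time, $K^1_t-K^2_t\le \sup_{u}|s^1_u-s^2_u|+\frac{1}{c}(\bar L_T\vee \bar R_T)$, which is in fact slightly sharper than \eqref{diffk} because $C/c\ge 1$.

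The genuine gap is the propagation of this estimate from the set where the reflecting measures act to all of $[0,T]$. The device you propose --- ``integrating the increments of $K^1-K^2$ against their own reflecting structure'' --- is not the right one, and the double-counting problem you flag is real for that route. What closes the argument is a maximum-principle step, not an integration: set $D=\sup_{t}(K^1_t-K^2_t)$ and suppose $D>0$. Since $K^1-K^2$ is continuous, starts from $0$, and can only increase through the measure $dK^{1,r}+dK^{2,l}$, every left neighborhood of the first time $\tau$ at which the supremum is attained must meet $\mathrm{supp}(dK^{1,r}+dK^{2,l})$; evaluating the boundary inequality above along a sequence of such times converging to $\tau$ and using continuity gives $D\le \sup_{u}|s^1_u-s^2_u|+\frac{1}{c}(\bar L_T\vee \bar R_T)$, and exchanging the roles of the two solutions yields the two-sided bound \eqref{diffk}. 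No integration against the four monotone components is needed; one only has to identify which two of them can push $K^1-K^2$ upward. Your fallback --- deriving the estimate from the explicit representation of the two-barrier Skorokhod map through the functions $\phi,\psi$ of Proposition \ref{oscillation} --- is presumably how \cite{Li} actually produces the constants $C/c$ and $1/c$ (the paper runs exactly this kind of computation in the display following \eqref{3-0}), but as written your proposal gestures at both routes without completing either, so it does not yet constitute a proof.
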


The following proposition provides the oscillation of $K$ in the closed  time interval $[\theta_1,\theta_2]\subset [0,T]$.
\begin{proposition}[\cite{Li}]\label{oscillation}
Given $s\in C[0,T]$, for any $t\geq 0 $, let $\phi_t,\psi_t$ be the unique solution to the following equations, respectively 
$$l(t,s_t+x)=0,\ r(t,s_t+x)=0.$$
 Then, the oscillation of $K$ can be dominated by the oscillation of $\phi, \psi$ on closed interval $[\theta_1,\theta_2]$, i.e.,
$$\sup_{t_1,t_2\in[\theta_1,\theta_2]}\left|K_{t_1}-K_{t_2}\right|\leq \sup_{s,u\in[\theta_1,\theta_2]}\left|\phi_{s}-\phi_{u}\right|+\sup_{s,u\in[\theta_1,\theta_2]}\left|\psi_{s}-\psi_{u}\right|. $$
\end{proposition}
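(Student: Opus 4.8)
The plan is to reinterpret $K$ as the reflection term of a path that is doubly reflected between the two moving barriers $\psi$ (lower) and $\phi$ (upper), and then to bound its oscillation by dissecting the monotone excursions between these barriers. First I would record the geometry behind $\phi,\psi$. By Assumption \ref{asslr}(ii) the equations $l(t,s_t+x)=0$ and $r(t,s_t+x)=0$ have unique solutions $\phi_t,\psi_t$, and since $l\le r$ with both strictly increasing we get $\psi_t\le\phi_t$ (evaluating $l\le r$ at $s_t+\psi_t$ shows the zero of $l$ lies to the right of the zero of $r$). Writing $K_t=x_t-s_t$, condition (ii) of Definition \ref{def1} together with monotonicity of $l,r$ is exactly $\psi_t\le K_t\le\phi_t$, while the Skorokhod condition (iii) becomes the statement that $K^r$ increases only on $\{t:K_t=\psi_t\}$ and $K^l$ only on $\{t:K_t=\phi_t\}$. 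Assumption \ref{asslr}(iv) forces $\phi_t-\psi_t$ to be bounded below by a positive constant, so the two barriers are never met simultaneously; since $K=x-s$ is continuous, $K^r,K^l$ may be taken continuous and at each instant at most one is active. Thus $K$ can rise only while pinned to $\psi$ and can fall only while pinned to $\phi$.

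The core is a monotone–excursion estimate. Write $\mathrm{osc}(g):=\sup_{s,u\in[\theta_1,\theta_2]}|g_s-g_u|$. On any subinterval $[a,b]\subseteq[\theta_1,\theta_2]$ on which $K$ never meets $\phi$, the increasing part $K^l$ is constant, so $K$ is nondecreasing and all of its increase is carried by $dK^r$, which charges only $\{K=\psi\}$; hence at every instant of increase $K=\psi$, and by continuity the final value is either $K_a$ or the value of $\psi$ at the last increase time, so $K_b\le\max(K_a,\sup_{[a,b]}\psi)$. Symmetrically, on a subinterval avoiding $\psi$ one has $K$ nonincreasing and $K_b\ge\min(K_a,\inf_{[a,b]}\phi)$.

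With these lemmas I would bound $M-m$, where $M=\max_{[\theta_1,\theta_2]}K$ and $m=\min_{[\theta_1,\theta_2]}K$ are attained at $t_M,t_m$. Assume $t_m<t_M$ (the reverse order is symmetric under $\phi\leftrightarrow\psi$, and $t_m=t_M$ is trivial). Let $\sigma$ be the last time in $[t_m,t_M]$ at which $K$ meets $\phi$, with $\sigma=t_m$ if there is none. On $(\sigma,t_M]$ the path avoids $\phi$, so the excursion estimate gives $M=K_{t_M}\le\max(K_\sigma,\sup_{[\sigma,t_M]}\psi)$. If the maximum is $\sup_{[\sigma,t_M]}\psi$, then using $m\ge\inf\psi$ (because $K\ge\psi$) yields $M-m\le\mathrm{osc}(\psi)$. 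Otherwise $M\le K_\sigma$, forcing $K_\sigma=M$; if $\sigma=t_m$ this gives $M=m$, while if $\sigma>t_m$ then $K_\sigma=\phi_\sigma$, and tracing back on $[t_m,\sigma]$ to the last meeting $\sigma'$ of the lower barrier $\psi$ (via the symmetric estimate) shows $K\equiv M$ on $[\sigma',\sigma]$ with $M=\psi_{\sigma'}$, whence $M-m\le\psi_{\sigma'}-\psi_{t_m}\le\mathrm{osc}(\psi)$. Thus $M-m\le\mathrm{osc}(\psi)$ in this ordering and $M-m\le\mathrm{osc}(\phi)$ in the other, so in either case $M-m\le\mathrm{osc}(\phi)+\mathrm{osc}(\psi)$, which is precisely the claimed bound.

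I expect the main obstacle to be the bookkeeping in the last step: correctly identifying the last-touch times $\sigma,\sigma'$, verifying that the relevant excursions are genuinely monotone, and checking that the suprema and infima of the barriers controlling each excursion collapse to the full oscillations on $[\theta_1,\theta_2]$. By contrast, the reformulation in the first paragraph—converting the nonlinear constraints into the barrier bounds $\psi\le K\le\phi$ with the correct activity sets for $K^r$ and $K^l$—is routine once strict monotonicity is invoked, and the continuity of $K^r,K^l$ together with the strict separation $\phi-\psi>0$ from Assumption \ref{asslr}(iv) is exactly what keeps the excursion decomposition clean.
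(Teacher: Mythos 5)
Your argument is essentially correct, but note that the paper does not actually prove Proposition \ref{oscillation}: it is imported from \cite{Li} as a black box, so there is no in-text proof to compare against. Taken on its own terms, your reduction is the right one. Strict monotonicity of $l(t,\cdot)$ and $r(t,\cdot)$ converts condition (ii) of Definition \ref{def1} into $\psi_t\le K_t\le\phi_t$ and the flat-off conditions into the statement that $dK^r$ is carried by the closed set $\{K=\psi\}$ and $dK^l$ by $\{K=\phi\}$; Assumption \ref{asslr}(iv) keeps these two sets disjoint, so the two measures are mutually singular and your excursion decomposition is legitimate. The monotone-excursion estimate is also proved correctly: the essential point, that the last increase time of a continuous nondecreasing $K$ whose increase is carried by $\{K=\psi\}$ must itself lie in that closed set, is exactly what makes $K_b\le\max(K_a,\sup_{[a,b]}\psi)$ work. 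Two remarks. First, in the final case analysis you should also dispose of the sub-case where $[t_m,\sigma]$ contains no meeting with $\psi$ (then $K$ is nonincreasing there and $M=m$ outright); this is the only missing branch and it is trivial. Second, your argument actually yields the stronger bound $\mathrm{osc}(K)\le\max\bigl(\mathrm{osc}(\phi),\mathrm{osc}(\psi)\bigr)$, and can be streamlined: if $t_m<t_M$, let $\tau$ be the last time in $[t_m,t_M]$ with $K_\tau=\psi_\tau$ (if there is none, $dK^r$ does not charge $(t_m,t_M]$, so $K$ is nonincreasing and $M=m$); then $K$ is nonincreasing on $[\tau,t_M]$, forcing $M=K_\tau=\psi_\tau$, while $m=K_{t_m}\ge\psi_{t_m}$, so $M-m\le\mathrm{osc}(\psi)$, and symmetrically $M-m\le\mathrm{osc}(\phi)$ when $t_M<t_m$. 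Either way the stated sum bound follows, so the proposal does establish the proposition.
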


\section{Main result}

In this paper, the objective is to investigate the well-posedness of the doubly mean reflected MFBSDE \eqref{BSDEDMR} with non-Lipschitz generator.  We need the following  assumptions for the terminal value $\xi$ and the generator $f$.  
\begin{assumption}\label{assf}
\begin{itemize}
\item[(i)] The terminal value $\xi\in\mathcal{L}^2(\mathcal{F}_T)$ satisfies $\mathbf{E}\left[L(T,\xi)\right]\leq 0\leq \mathbf{E}\left[R(T,\xi)\right]$.
\item[(ii)]  For any $t\in[0,T]$, $ y_1,y_2\in \mathbb{R}$, $\mu_1, \mu_2\in\mathcal{P}_{1}\left(\mathbb{R}\right)$,  $z_1,z_2\in \mathbb{R}^d$, $\nu_1,\nu_2\in\mathcal{P}_{1}\left(\mathbb{R}^d\right)$, there exists a constant $\lambda>0$ such that $\mathbf{E}\left[\int_{0}^{T}\left|f(t,0,\delta_0,0,\delta_0)\right|^2dt\right]<\infty$ and $\mathbf{P}$-$a.s.$, 
\begin{equation*}\begin{split}
 &\left|f(t,y_1,\mu_1,z_1,\nu_1)-f(t,y_2,\mu_2,z_2,\nu_2)\right|^2\\
 \leq& \rho\left(\left|y_1-y_2\right|^2+d_1^2\left(\mu_1,\mu_2\right)\right)+\lambda^2\left(\left|z_1-z_2\right|^2+d_1^2\left(\nu_1,\nu_2\right)\right),  
\end{split}  
\end{equation*}
where $\rho:[0,\infty) \rightarrow [0,\infty) $ is a continuous, non-decreasing and concave function satisfying $\rho(0)=0,\ \rho(r)>0$ for $r>0$, and $\int_{0+}\frac{dr}{\rho(r)}=\infty$.
\end{itemize}     
\end{assumption}
\begin{remark}\label{concave}
Without loss of generality, assume that $\rho(r)\leq \beta\left(1+\left|r\right|\right)$ for some constant $\beta>1$. Therefore, we have  $$\left|f(r,y,\mu,z,\nu)-f(r,0,\delta_{0},0,\delta_{0})\right|^2\leq \beta+\beta\left|y\right|^2+\beta d_1^2(\mu,\delta_0)+\lambda^2\left|z\right|^2+\lambda^2d_1^2\left(\nu,\delta_0\right).$$   
\end{remark}

The loss functions $L,R:\Omega\times [0,T]\times\mathbb{R}\rightarrow \mathbb{R}$ are measurable maps with respect to $\mathcal{F}_T\times \mathcal{B}([0,T])\times \mathcal{B}(\mathbb{R})$ satisfying the following conditions.

\begin{assumption}\label{assLR}
\begin{itemize}
\item[(i)] For any fixed $(\omega,x)\in \Omega\times\mathbb{R}$, $L(\omega, \cdot, x), R(\omega, \cdot, x)$ are continuous.
\item[(ii)]  
There exists a constant $M>0$ such that $$\mathbf{E}\left[\sup_{t\in[0,T]}\left|L(t,0)\right|+\sup_{t\in[0,T]}\left|R(t,0)\right|\right]\leq M.$$
\item[(iii)] For any fixed $(\omega,t)\in \Omega\times [0,T]$, $L(\omega,t,\cdot),R(\omega,t,\cdot)$ are strictly increasing and bi-Lipschitz continuous. That is, there  exist two constants $c,C$ satisfying $0<c<C$ such that for any $x,y\in \mathbb{R}$,
\begin{align*}
&c\left|x-y\right|\leq \left|L(\omega,t,x)-L(\omega,t,y)\right|\leq C\left|x-y\right|,\\
&c\left|x-y\right|\leq \left|R(\omega,t,x)-R(\omega,t,y)\right|\leq C\left|x-y\right|.
\end{align*}
\item[(iv)] $\inf_{\omega,t,x} \left(R(\omega,t,x)-L(\omega,t,x)\right)>0$.
\end{itemize}
\end{assumption}

We are now ready to state the main result of this paper.
\begin{theorem}\label{th-1}
 Let Assumptions \ref{assf} and \ref{assLR} hold. Then, the doubly mean reflected MFBSDE \eqref{BSDEDMR} admits a unique  solution $(Y,Z,K)\in \mathcal{S}^2\times\mathcal{H}^2\times BV[0,T]$ . 
\end{theorem}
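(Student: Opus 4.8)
The plan is to prove existence by a Picard iteration in which the non-Lipschitz $(y,\mu)$-arguments of the generator are frozen at the previous step, while the Lipschitz $(z,\nu)$-arguments and the double mean reflection are resolved exactly at each step; uniqueness will follow from the same a priori machinery. I would start from $(Y^0,Z^0,K^0)=(0,0,0)$ and, given $(Y^n,Z^n,K^n)$, set the driver $g^n(s,z,\nu):=f(s,Y^n_s,\P_{Y^n_s},z,\nu)$, which by Assumption \ref{assf}(ii) is Lipschitz in $(z,\nu)$ with constant $\lambda$. I let $(Y^{n+1},Z^{n+1},K^{n+1})$ solve the doubly mean reflected MFBSDE with driver $g^n$ and reflection governed by $L,R$. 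To solve this inner problem I first solve the unreflected MFBSDE $y^{n+1}_t=\xi+\int_t^T g^n(s,Z^{n+1}_s,\P_{Z^{n+1}_s})\,\d s-\int_t^T Z^{n+1}_s\,\d B_s$, which is well posed by the Lipschitz MFBSDE theory (see \cite{BDLP}). Since $g^n$ does not depend on $y$, the reflection decouples: $K^{n+1}$ is deterministic, so $Y^{n+1}_t=y^{n+1}_t+(K^{n+1}_T-K^{n+1}_t)$ with $Z^{n+1}$ unchanged, and the shift $A^{n+1}_t:=K^{n+1}_T-K^{n+1}_t$ is determined by the deterministic Skorokhod problem associated with the loss functions $\hat L^{n+1}(t,a):=\E[L(t,y^{n+1}_t+a)]$ and $\hat R^{n+1}(t,a):=\E[R(t,y^{n+1}_t+a)]$. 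I would check that, by Assumption \ref{assLR} and the $\mathcal S^2$-continuity of $y^{n+1}$, these deterministic loss functions satisfy Assumption \ref{asslr}, so that Theorem \ref{SP} yields a unique $(A^{n+1},K^{n+1})$ after the standard time-reversal converting the backward shift $K^{n+1}_T-K^{n+1}_\cdot$ into a forward reflection with zero input.

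Next I would derive uniform bounds on the iterates. Using Remark \ref{concave} (the linear growth $\rho(r)\le\beta(1+r)$) together with the a priori estimates for MFBSDEs given in the Appendix (following \cite{LLZ}), I obtain $\sup_n(\|Y^n\|_{\mathcal S^2}+\|Z^n\|_{\mathcal H^2})<\infty$; the oscillation estimate (Proposition \ref{oscillation}) then bounds $\sup_n\|K^n\|_\infty$. These bounds guarantee that the iteration stays in $\mathcal S^2\times\mathcal H^2\times BV[0,T]$ and that the quantities to which $\rho$ is applied remain controlled.

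The core step is showing that $(Y^n,Z^n)$ is Cauchy. Writing $Y^{n+1}-Y^n=(y^{n+1}-y^n)+(A^{n+1}-A^n)$, I control the reflection term by the continuity estimate: since all inner Skorokhod problems have zero input, \eqref{diffk} gives $\sup_t|K^{n+1}_t-K^n_t|\le\frac1c(\bar L_T\vee\bar R_T)$, where $\bar L_T,\bar R_T$ now measure the difference between $\hat L^{n+1},\hat L^n$ (resp.\ $\hat R^{n+1},\hat R^n$) and are bounded by $C\sup_t\E|y^{n+1}_t-y^n_t|$ via Assumption \ref{assLR}(iii); hence $\sup_t|A^{n+1}_t-A^n_t|\le \frac{2C}{c}\|y^{n+1}-y^n\|_{\mathcal S^2}$. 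For the base term, the MFBSDE difference estimate under Mao's condition, the bound $d_1^2(\P_{Y^n_s},\P_{Y^{n-1}_s})\le\E|Y^n_s-Y^{n-1}_s|^2$, and concavity of $\rho$ (Jensen) yield $\|y^{n+1}-y^n\|_{\mathcal S^2}^2+\|Z^{n+1}-Z^n\|_{\mathcal H^2}^2\le C\int_0^T\rho\big(\E|Y^n_s-Y^{n-1}_s|^2\big)\,\d s$, where the $\lambda^2$-term in $z$ is absorbed by the Gronwall constant precisely because the $z$-argument is solved exactly rather than frozen. Setting $\varphi_n(t):=\E[\sup_{s\in[t,T]}|Y^{n+1}_s-Y^n_s|^2]$ and combining the two estimates gives a recursion $\varphi_n(t)\le C\int_t^T\rho(\varphi_{n-1}(s))\,\d s$; the condition $\int_{0+}\frac{\d r}{\rho(r)}=\infty$ then forces $\sup_t\varphi_n(t)\to0$ by the Bihari-type argument of \cite{M}, so that $(Y^n,Z^n,K^n)$ converges in $\mathcal S^2\times\mathcal H^2\times BV[0,T]$ to some $(Y,Z,K)$.

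Finally I would pass to the limit: continuity of $f$ with Mao's estimate transfers the generator, Proposition \ref{continuity} (continuity of the Skorokhod map) transfers the reflection, and closedness preserves the constraints $\E[L(t,Y_t)]\le0\le\E[R(t,Y_t)]$ and the Skorokhod conditions, so $(Y,Z,K)$ solves \eqref{BSDEDMR}. Uniqueness follows by applying the same coupled BSDE/Skorokhod difference estimate to two solutions and invoking $\int_{0+}\d r/\rho(r)=\infty$. I expect the main obstacle to be closing this Bihari recursion while simultaneously controlling the deterministic reflection: one must confirm that freezing only $(y,\mu)$ while solving $(z,\nu)$ exactly produces a recursion driven solely by $\rho$ of the previous $Y$-difference (with no residual $\lambda^2\|Z^n-Z^{n-1}\|^2$ term), and that the loss functions $\hat L^n,\hat R^n$ inherit Assumption \ref{asslr} uniformly in $n$ so that Propositions \ref{continuity}--\ref{oscillation} apply throughout the iteration.
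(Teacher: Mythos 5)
Your proposal is correct and follows essentially the same route as the paper: a Picard iteration freezing only the $(y,\mu)$-arguments, well-posedness of each iterate via the Lipschitz doubly mean reflected MFBSDE theory, the decomposition $Y^{m}=y^{m}+(K^m_T-K^m_\cdot)$ with the deterministic shift controlled through the (time-reversed) Skorokhod problem and Propositions \ref{continuity}--\ref{oscillation}, the a priori MFBSDE estimates of the Appendix, and a backward Bihari recursion in $\rho$ to get the Cauchy property and uniqueness. The only differences are cosmetic (starting iterate, folding $y^{m}$ into the loss functions so the Skorokhod input is zero rather than $\E[y^m]$, and slightly different constants), none of which affects the argument.
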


 We will use the Picard iteration method to  prove Theorem \ref{th-1}. More precisely, we first set $Y^0=\left(\mathbf{E}_{t}\left[\xi\right]\right)_{0\leq t \leq T}$, and then define the sequence of the triple of  stochastic processes $\left\{\left(Y^m,Z^m,K^m\right)\right\}_{m=1}^{\infty}$ recursively  by solving the doubly mean reflected MFBSDEs:
 \begin{equation}\label{nonlinearyz-1}
\begin{cases}
Y^m_t=\xi+\int_t^T f^m(s,Z^m_s,\P_{Z^m_s})ds-\int_t^T Z^m_s dB_s+K^m_T-K^m_t, \\
\mathbf{E}[L(t,Y^m_t)]\leq 0\leq \mathbf{E}[R(t,Y^m_t)], \\
K^m_t=K^{R,m}_t-K^{L,m}_t,\ K^{R,m},K^{L,m}\in I[0,T], \\
\int_0^T \mathbf{E}[R(t,Y^m_t)]dK_t^{R,m}=\int_0^T \mathbf{E}[L(t,Y^m_t)]dK^{L,m}_t=0,
\end{cases}
\end{equation}
where for any $s\in[0,T]$, $z\in\mathbb{R}^d$ and $\nu\in\mathcal{P}_1(\mathbb{R}^d)$,
\begin{align}\label{fm}
    f^m(s,z,\nu)=f(s,Y^{m-1}_s,\P_{Y^{m-1}_s},z,\nu).
\end{align}
By Theorem 3.5 in \cite{LS}, for any $m=1,2,\cdots$, the doubly mean reflected MFBSDE \eqref{nonlinearyz-1} admits a  unique  solution $\left(Y^{m},Z^{m},K^{m}\right)\in \mathcal{S}^2\times\mathcal{H}^2\times BV[0,T]$. We will show that the sequence $\left\{\left(Y^m,Z^m,K^m\right)\right\}_{m=1}^{\infty}$ converges under the appropriate norm and the triple of limit processes is the solution to the doubly mean reflected MFBSDE \eqref{BSDEDMR}.

\subsection{A priori estimates}
This section is devoted to some uniform estimates for $\left\{\left(Y^m,Z^m,K^m\right)\right\}_{m=1}^{\infty}$ defined by \eqref{nonlinearyz-1}. 
\begin{lemma}\label{le-1}
Let Assumptions \ref{assf} and \ref{assLR} hold. Then for any $m\geq 1$,
\begin{equation*}
\mathbf{E}\left[\sup_{t\in[0,T]}\left|Y_{t}^{m}\right|^2+\int_{0}^{T}\left|Z_{t}^{m}\right|^2dt\right]\leq P(\lambda,\beta, C,c,M,T)\mathbf{E}\left[1+\left|\xi\right|^2+\int_{0}^{T}\left|f(s,0,\delta_0,0,\delta_0)\right|^2 ds\right]
\end{equation*}
\end{lemma}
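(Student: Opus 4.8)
The plan is to bound $Y^m$, $Z^m$ and the deterministic corrector $K^m$ separately, and then to close the estimate by iterating an integral inequality in the index $m$ rather than by seeking a contraction. Write $\widehat{Y}^m_t := Y^m_t - (K^m_T - K^m_t)$. Since $K^m$ is deterministic, the pair $(\widehat{Y}^m, Z^m)$ solves, by direct substitution, the unreflected MFBSDE $\widehat{Y}^m_t = \xi + \int_t^T f^m(s, Z^m_s, \P_{Z^m_s})\,ds - \int_t^T Z^m_s\,dB_s$, whose generator is Lipschitz in $(z,\nu)$ with constant $\lambda$ by Assumption \ref{assf}(ii). Hence the a priori estimate for MFBSDEs from the Appendix applies and gives, for every $t \in [0,T]$,
\begin{equation*}
\mathbf{E}\Big[\sup_{s\in[t,T]}|\widehat{Y}^m_s|^2 + \int_t^T |Z^m_s|^2\,ds\Big] \le P(\lambda,T)\,\mathbf{E}\Big[|\xi|^2 + \int_t^T |f^m(s,0,\delta_0)|^2\,ds\Big].
\end{equation*}
Using $f^m(s,0,\delta_0) = f(s, Y^{m-1}_s, \P_{Y^{m-1}_s}, 0, \delta_0)$, the growth bound in Remark \ref{concave}, and $d_1^2(\P_{Y^{m-1}_s}, \delta_0) = (\mathbf{E}|Y^{m-1}_s|)^2 \le \mathbf{E}|Y^{m-1}_s|^2$, the source is controlled by $\int_t^T |f(s,0,\delta_0,0,\delta_0)|^2\,ds$ plus a constant multiple of $\int_t^T \Phi_{m-1}(s)\,ds$, where $\Phi_m(t) := \mathbf{E}[\sup_{s\in[t,T]} |Y^m_s|^2]$.

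Next I would control the reflection. For each $m$ set $\ell^m(t,a) := \mathbf{E}[L(t, \widehat{Y}^m_t + a)]$ and $r^m(t,a) := \mathbf{E}[R(t, \widehat{Y}^m_t + a)]$; the monotonicity, bi-Lipschitz and spacing properties in Assumption \ref{assLR} pass through the expectation, so $(\ell^m, r^m)$ satisfies Assumption \ref{asslr} with the same constants $c, C$. Because $K^m$ is deterministic, the reflection conditions in \eqref{nonlinearyz-1} state exactly that (after reversing time, so that the corrector starts at the origin) $K^m_T - K^m_\cdot$ is the Skorokhod corrector for the input $s\equiv 0$ with constraints $\ell^m, r^m$. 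Proposition \ref{oscillation} then dominates the oscillation of $K^m$ on $[t,T]$ by the oscillations of $\phi^m, \psi^m$ solving $\mathbf{E}[L(s, \widehat{Y}^m_s + \phi^m_s)] = 0$ and $\mathbf{E}[R(s, \widehat{Y}^m_s + \psi^m_s)] = 0$. Subtracting $\mathbf{E}[L(s, \widehat{Y}^m_s)]$ and using the lower Lipschitz constant $c$ gives $c|\phi^m_s| \le \mathbf{E}|L(s,0)| + C\,\mathbf{E}|\widehat{Y}^m_s|$, and similarly for $\psi^m$ with $R$. With Assumption \ref{assLR}(ii) and $K^m_T - K^m_T = 0$ this yields $\sup_{s\in[t,T]} |K^m_T - K^m_s|^2 \le P(c,C)\big(M^2 + \widehat{\Phi}_m(t)\big)$, where $\widehat{\Phi}_m(t) := \mathbf{E}[\sup_{s\in[t,T]}|\widehat{Y}^m_s|^2]$.

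Combining the two bounds through $Y^m = \widehat{Y}^m + (K^m_T - K^m_\cdot)$ produces the integral inequality
\begin{equation*}
\Phi_m(t) \le \bar{A} + B\int_t^T \Phi_{m-1}(s)\,ds, \qquad t\in[0,T],
\end{equation*}
with $\bar{A} = P(\lambda,\beta,c,C,M,T)\,\mathbf{E}[1 + |\xi|^2 + \int_0^T |f(s,0,\delta_0,0,\delta_0)|^2\,ds]$ and $B = P(\lambda,\beta,c,C)$. Unwinding this $m$ times generates the $k$-fold iterated time integrals $\tfrac{(B(T-t))^k}{k!}$, whence $\Phi_m(0) \le \bar{A}\,e^{BT} + \tfrac{(BT)^m}{m!}\sup_{s}\Phi_0(s)$; since $Y^0_u = \mathbf{E}_u[\xi]$ gives $\Phi_0(s) \le 4\mathbf{E}|\xi|^2$ by Doob's inequality, the right-hand side is bounded uniformly in $m$ by the asserted quantity. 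Feeding this uniform control of $\Phi_{m-1}$ back into the a priori estimate of the first step then bounds $\mathbf{E}\int_0^T |Z^m_s|^2\,ds$ as well.

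I expect the difficulty to lie in two places. The first is recasting the mean reflection as the deterministic Skorokhod problem and transferring Assumption \ref{assLR} to $(\ell^m, r^m)$, so that the oscillation estimate of Proposition \ref{oscillation} can be invoked and the corrector tied back to $\widehat{Y}^m$ and the loss data. The second, and the genuinely decisive point, is the iteration in $m$: it is the factorial gain from the iterated integrals, not any smallness of $B$ or of the horizon $T$, that converts the naively recursive bound into one that is uniform in $m$.
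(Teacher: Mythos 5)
Your argument is correct and follows the paper's own proof in all essentials: the decomposition $Y^m=\widehat{Y}^m+(K^m_T-K^m_\cdot)$ with $(\widehat{Y}^m,Z^m)$ solving the unreflected MFBSDE, the a priori estimate from the Appendix combined with Remark \ref{concave} to control the source, the Skorokhod representation of $K^m$ together with Proposition \ref{oscillation} to bound its oscillation by $M$ and $\mathbf{E}[\sup|\widehat{Y}^m|]$, and a Gronwall-type recursion in $m$ closed without any smallness of the horizon. The only cosmetic differences are that the paper bounds the oscillations of $\phi^m,\psi^m$ by comparison with a reference Skorokhod problem driven by $\mathbf{E}[\xi]$ rather than directly against $\ell^m(s,0)$, and closes the recursion via an ODE supersolution $p(t)$ and induction on $m$ instead of your explicit factorial unwinding.
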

\begin{proof}
 For any $t\in[0,T]$, by Theorem 3.1 in \cite{BLP}, the following $\text{MFBSDE}$
 \begin{equation*}\label{ym}
  y_t^m=\xi+\int_{t}^{T} f^m(r,z^m_r,\P_{z^m_r})dr- \int_{t}^{T}z_r^mdB_r 
 \end{equation*}
 admits a unique solution $(y^{m},z^{m})\in \mathcal{S}^2\times\mathcal{H}^2$. 
 Since $K^m$ is a deterministic function of bounded variation, $\left(Y^m-(K^m_{T}-K^m), Z^m\right)$ is again an $\mathcal{S}^2\times\mathcal{H}^2$-solution to MFBSDE \eqref{ym}, which implies that
 \begin{equation}\label{YmZm}
   Y_{t}^{m}=y_{t}^{m}+K_{T}^{m}-K_{t}^{m},\ Z_t^{m}=z_t^{m}.
 \end{equation}
 It follows that for any $t\in[0,T]$, 
 \begin{equation}\label{Ymym}
  \mathbf{E}[\left|Y_{t}^{m}\right|^2]\leq 2\mathbf{E}[\left|y_{t}^m\right|^2]+2\left|K_{T}^{m}-K_{t}^{m}\right|^2.
  \end{equation}
  According to Theorem 3.9 and Proposition 4.2 in \cite{L}, we know that the solution $K^m$ of \eqref{nonlinearyz-1} on the time interval $[0,T]$ 
has the following representation $$K^m_t=\bar{K}^m_{T}-\bar{K}^m_{T-t},$$ where $\bar{K}^m$ is the second component of the solution to the Skorokhod problem $\mathbb{SP}_{\bar{l}^m}^{\bar{r}^m}(\bar{s}^m)$. Here, for any $t\in[0,T]$ and any positive integer $m$, we have
\begin{equation}\label{smlmrm}\begin{split}
 &\bar{s}^m_{t}=\mathbf{E}[y^m_{T-t}],\\ 
 &\bar{l}^m(t,x)=\E\left[L\left(T-t,y_{T-t}^m-\E[y_{T-t}^m]+x\right)\right], \\ &\bar{r}^m(t,x)=\E\left[R\left(T-t,y_{T-t}^m-\E[y_{T-t}^m]+x\right)\right].   
\end{split}
\end{equation}

Let  $\phi^m$, $\psi^m$ be the unique solutions to the following equations, respectively, $$\bar{l}^m\left(t,\bar{s}_t^m+x\right)=0,\ \bar{r}^m\left(t,\bar{s}_t^m+x\right)=0.$$
Additionally, we introduce another new  Skorokhod problem $\mathbb{SP}_{\bar{l}^0}^{\bar{r}^0}(\bar{s}^{0})$, where for any $t\in[0,T]$
   $$
  \begin{aligned}
   \bar{s}_{t}^{0}=\mathbf{E}\left[\xi\right],\ \bar{l}^{0}(t,x)=\mathbf{E}\left[L(T-t,x)\right],\ \bar{r}^{0}(t,x)=\mathbf{E}\left[R(T-t,x)\right].
  \end{aligned}
  $$
 Let $\phi^{0}$, $\psi^{0}$ be the unique solutions to the following equations, respectively, $$\bar{l}^0\left(t,\bar{s}_t^{0}+x\right)=0,\ \bar{r}^0\left(t,\bar{s}_t^{0}+x\right)=0.$$ 
 Then applying Proposition \ref{oscillation}, we can get that for each $t\in[0,T]$,
  \begin{equation}\begin{split}\label{3-0}
   &\left|K_{T}^{m}-K_{t}^{m}\right|\leq\sup_{p_1,p_2\in[t,T]}\left|K_{p_1}^{m}-K_{p_2}^{m}\right|\\
   =&\sup_{p_1,p_2\in[t,T]}\left|\bar{K}_{T}^{m}-\bar{K}_{T-p_1}^{m}-\bar{K}_{T}^{m}+\bar{K}_{T-p_2}^{m}\right|\\ 
  \leq &\sup_{t_1,t_2\in[0,T-t]}\left|\bar{K}_{t_1}^{m}-\bar{K}_{t_2}^{m}\right|\leq \sup_{r,u\in[0,T-t]}\left|\phi_r^m-\phi_u^m\right|+\sup_{r,u\in[0,T-t]}\left|\psi_r^m-\psi_u^m\right|\\ 
  \leq&2\left(\sup_{r\in[0,T-t]}\left|\phi_r^m-\phi_r^{0}\right|+\sup_{r\in[0,T-t]}\left|\psi_r^m-\psi_r^{0}\right|+\sup_{r\in[0,T-t]}\left|\phi_r^{0}\right|+\sup_{r\in[0,T-t]}\left|\psi_r^{0}\right|\right).
  \end{split}   
  \end{equation}
Recalling the definition of $\bar{l}^m$ and $\bar{l}^0$, it is easy to check that 
   $$
  \begin{aligned}
  c\left|\bar{s}_{r}^m+\phi_r^m-(\bar{s}_{r}^{0}+\phi_r^0)\right|
  \leq&\left|\bar{l}^{m}(r,\bar{s}_{r}^m+\phi_r^m)-\bar{l}^{m}(r,\bar{s}_{r}^{0}+\phi_r^0)\right|\\
  = &\left|\bar{l}^{m}(r,\bar{s}_{r}^{0}+\phi_r^0)-\bar{l}^{0}(r,\bar{s}_{r}^{0}+\phi_r^0)\right| \\
  \leq& 2C\mathbf{E}\left[\left|y^{m}_{T-r}\right|\right],\\
  \end{aligned} 
   $$
It follows that
  \begin{equation}\begin{split}\label{3-1}
  \sup_{r\in[0,T-t]}\left|\phi_r^m-\phi_r^{0}\right|&\leq \frac{2C}{c}\sup_{r\in[0,T-t]}\mathbf{E}\left[\left|y^{m}_{T-r}\right|\right]+\sup_{r\in[0,T-t]}\left|\bar{s}_r^m-\bar{s}_r^{0}\right|\\
  &\leq \left(1+\frac{2C}{c}\right)\sup_{r\in[0,T-t]}\mathbf{E}\left[\left|y^{m}_{T-r}\right|\right]+\mathbf{E}\left[\left|\xi\right|\right].\\  \end{split}  
  \end{equation}
Similarly to \eqref{3-1}, we have 
\begin{equation}\label{3-2}
  \sup_{r\in[0,T-t]}\left|\psi_r^m-\psi_r^{0}\right| \leq \left(1+\frac{2C}{c}\right)\sup_{r\in[0,T-t]}\mathbf{E}\left[\left|y^{m}_{T-r}\right|\right]+\mathbf{E}\left[\left|\xi\right|\right].
  \end{equation}
 Thus, applying Assumption \ref{assLR} and combining \eqref{3-0}-\eqref{3-2}, we obtain that 
  \begin{equation}\label{3-5}
  \left|K_{T}^{m}-K_{t}^{m}\right|\leq 4\left(1+\frac{2C}{c}\right)\mathbf{E}\left[\sup_{r\in[t,T]}\left|y^{m}_{r}\right|\right]+4\left(1+\frac{2C}{c}\right)\mathbf{E}\left[\left|\xi\right|\right]+P(c,C,M),
  \end{equation} 
  where $P(c,C,M)=\frac{2C}{c}\E[\left|\xi\right|]+\frac{M}{c}$.
  
 By Proposition 2.2 in \cite{LLZ} and  Remark \ref{concave}, we obtain that for any $t\in[0,T]$,
 \begin{equation}
 \begin{split}\label{ym-1}
&\E\left[\sup_{r\in[t,T]}\left|y_r^m\right|^2\right]\\
\leq& P(\lambda,T)\E\left[\left|\xi\right|^2+\int_{t}^{T}\left|f(s,Y_s^{m-1},\P_{Y_s^{m-1}},0,\delta_0)\right|^2ds\right]\\
\leq& P(\lambda,T)\E\left[\left|\xi\right|^2+2\int_{t}^{T}\left|f(s,Y_s^{m-1},\P_{Y_s^{m-1}},0,\delta_0)-f(s,0,\delta_0,0,\delta_0)\right|^2+\left|f(s,0,\delta_0,0,\delta_0)\right|^2ds\right]\\
\leq& P(\lambda,T)\E\left[2\beta(T-t)+\left|\xi\right|^2+2\int_{t}^{T}\left|f(s,0,\delta_0,0,\delta_0)\right|^2ds+4\beta\int_{t}^{T}\left|Y_s^{m-1}\right|^2ds\right]
 \end{split}  
 \end{equation}
 Combining \eqref{3-5} and \eqref{ym-1}, we have
  \begin{equation}\label{3-6}
  \left|K_{T}^{m}-K_{t}^{m}\right|^2
  \leq P(\lambda,\beta,C,c,T,M)\E\left[1+\left|\xi\right|^2+\int_{t}^{T}\left|f(s,0,\delta_0,0,\delta_0)\right|^2ds+\int_{t}^{T}\left|Y_s^{m-1}\right|^2ds\right].   
  \end{equation} 
Plugging Equations \eqref{ym-1} and \eqref{3-6} into \eqref{Ymym} yields that 
\begin{equation*}
\label{3-7}
\mathbf{E} [\left|Y_t^m\right|^2]
\leq P(\lambda,\beta,C,c,T,M)\E\left[1+\left|\xi\right|^2+\int_{t}^{T}\left|f(s,0,\delta_0,0,\delta_0)\right|^2ds+\int_{t}^{T}\left|Y_s^{m-1}\right|^2ds\right].     
\end{equation*}

Set $$p(t)=P(\lambda,\beta,C,c,T,M)\E\left[1+\left|\xi\right|^2+\int_{t}^{T}\left|f(s,0,\delta_0,0,\delta_0)\right|^2ds\right]e^{P(\lambda,\beta,C,c,T,M)(T-t)}.$$
It is easy to check that $p(\cdot)$ is the solution to the following ODE:
$$p(t)=P(\lambda,\beta,C,c,T,M)\E\left[1+\left|\xi\right|^2+\int_{t}^{T}\left|f(s,0,\delta_0,0,\delta_0)\right|^2ds+\int_{t}^{T}p(s)ds\right].$$
Hence, by recurrence we get that for any $m\geq1$ and for each $t\in[0,T]$,
$$\begin{aligned}
 \E[\left|Y_t^m\right|^2]\leq p(t) \leq \max_{t\in[0,T]}p(t)=p(0)=P(\lambda,\beta,C,c,T,M)\E\left[1+\left|\xi\right|^2+\int_{t}^{T}\left|f(s,0,\delta_0,0,\delta_0)\right|^2ds\right].
\end{aligned}$$
Recalling \eqref{YmZm},\eqref{ym-1} and applying Proposition 2.2 in \cite{LLZ}, we obtain that
\begin{equation*}
\label{3-9}
\E\left[\int_{0}^{T}\left|Z_s^m\right|^2ds\right]\leq P(\lambda,\beta,C,c,T,M)\E\left[1+\left|\xi\right|^2+\int_{0}^{T}\left|f(s,0,\delta_0,0,\delta_0)\right|^2ds\right]   
\end{equation*}
Applying the Doob inequality, we finally derive that
\begin{equation*}
\begin{split}
 &\mathbf{E} \left[\sup_{t\in[0,T]}\left|Y_t^m\right|^2\right]\leq P(\lambda,\beta, C,c,T,M )\mathbf{E}\left[1+\left|\xi\right|^2+\int_{0}^{T}\left|f(s,0,\delta_0,0,\delta_0)\right|^2ds\right].
\end{split}    
\end{equation*}
The proof is complete.
\end{proof}

The following lemma indicates that $\{Y^m\}_{m=1}^\infty$ and $\{Z^m\}_{m=1}^\infty$ are Cauchy sequneces in $\mathcal{S}^2$ and $\mathcal{H}^2$, respectively.
\begin{lemma}\label{le-2}
 Let Assumptions \ref{assf} and \ref{assLR} hold. Then, we have
 $$\mathbf{E}\left[\sup_{t\in[0,T]}\left|Y_t^m-Y_t^n\right|^2\right]\rightarrow 0,\ \mathbf{E}\left[\int_{0}^{T}\left|Z_{t}^m-Z_{t}^{n}\right|^2dt\right]\rightarrow 0,\ \text{as}\ m,n\rightarrow \infty.$$
\end{lemma}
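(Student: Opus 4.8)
The plan is to show that $\{(Y^m,Z^m)\}$ is Cauchy by estimating the differences $Y^m-Y^n$ and $Z^m-Z^n$ in terms of the differences at the previous level, exploiting the structure $Y^m_t = y^m_t + (K^m_T - K^m_t)$ established in Lemma \ref{le-1}. First I would fix $m,n$ and write $Y^m_t - Y^n_t = (y^m_t - y^n_t) + (K^m_T - K^m_t) - (K^n_T - K^n_t)$, so that the problem splits into a term coming from the MFBSDEs $(y^m,z^m),(y^n,z^n)$ (which differ only through their generators $f^m,f^n$, and hence through $Y^{m-1},Y^{n-1}$) and a term coming from the two Skorokhod problems. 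For the first term I would apply the a priori difference estimate for MFBSDEs from Proposition 2.2 in \cite{LLZ}, which together with Assumption \ref{assf}(ii) bounds $\mathbf{E}[\sup_r |y^m_r - y^n_r|^2]$ and $\mathbf{E}[\int_0^T |z^m_s - z^n_s|^2 ds]$ by a constant times $\mathbf{E}[\int_0^T \rho(|Y^{m-1}_s - Y^{n-1}_s|^2)\,ds]$. Crucially, since $Z^m = z^m$ and $Z^n = z^n$ by \eqref{YmZm}, this immediately controls the $\mathcal{H}^2$-difference of the $Z$-processes in terms of the previous level.

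For the Skorokhod term, I would use Proposition \ref{continuity} applied to the two Skorokhod problems $\mathbb{SP}_{\bar{l}^m}^{\bar{r}^m}(\bar{s}^m)$ and $\mathbb{SP}_{\bar{l}^n}^{\bar{r}^n}(\bar{s}^n)$ whose second components generate $K^m,K^n$ via $K^m_t = \bar{K}^m_T - \bar{K}^m_{T-t}$. The estimate \eqref{diffk} bounds $\sup_t|\bar{K}^m_t - \bar{K}^n_t|$ by $\frac{C}{c}\sup_t|\bar{s}^m_t - \bar{s}^n_t| + \frac{1}{c}(\bar{L}_T \vee \bar{R}_T)$, where $\bar{s}^m_t = \mathbf{E}[y^m_{T-t}]$ and the loss-function differences $\bar{l}^m - \bar{l}^n$, $\bar{r}^m - \bar{r}^n$ are, by \eqref{smlmrm} and the bi-Lipschitz bound in Assumption \ref{assLR}(iii), controlled by $C\,\mathbf{E}[|y^m_{T-t} - \mathbf{E}[y^m_{T-t}] - (y^n_{T-t} - \mathbf{E}[y^n_{T-t}])|]$, hence again by $\mathbf{E}[\sup_r |y^m_r - y^n_r|]$. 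Thus the oscillation of $K^m - K^n$ is dominated by $\mathbf{E}[\sup_r|y^m_r - y^n_r|^2]^{1/2}$ up to constants, and combining with the first term we arrive at an inequality of the schematic form
\begin{equation*}
\mathbf{E}\left[\sup_{t\in[0,T]}|Y^m_t - Y^n_t|^2\right] \leq P \,\mathbf{E}\left[\int_0^T \rho\left(|Y^{m-1}_s - Y^{n-1}_s|^2\right)ds\right].
\end{equation*}

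The main obstacle is closing this recursion: because $\rho$ is only concave (not Lipschitz), one cannot simply iterate a linear contraction. Here I would invoke Jensen's inequality for the concave $\rho$ to pass the expectation and time-integral inside, setting $u_m(t) = \mathbf{E}[\sup_{s\in[t,T]}|Y^m_s - Y^n_s|^2]$ (or a suitably localized quantity) and obtaining $u_m(t) \leq P\int_t^T \rho(u_{m-1}(s))\,ds$ after using the uniform bound from Lemma \ref{le-1} to control the relevant suprema. Define $\bar{u}(t) = \limsup_{m,n\to\infty} u_m(t)$; passing to the limit (justified by the uniform boundedness and the continuity of $\rho$) yields $\bar{u}(t) \leq P\int_t^T \rho(\bar{u}(s))\,ds$. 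The Bihari-type / Osgood argument using the divergence condition $\int_{0+} dr/\rho(r) = \infty$ then forces $\bar{u}\equiv 0$, which is exactly the Cauchy property for $\{Y^m\}$ in $\mathcal{S}^2$; feeding this back into the $\mathcal{H}^2$-estimate for the $Z$-differences gives the second convergence. The delicate points will be the correct bookkeeping of the $\limsup$ over the pair $(m,n)$ and verifying that the constant $P$ absorbs all dependence on $\lambda,\beta,C,c,M,T$ uniformly in $m,n$ so that the comparison ODE argument applies.
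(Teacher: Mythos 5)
Your proposal is correct and follows essentially the same route as the paper: the decomposition $Y^m=y^m+(K^m_T-K^m_\cdot)$, the a priori MFBSDE difference estimate combined with Jensen's inequality for the concave $\rho$, the continuity estimate of Proposition \ref{continuity} for the Skorokhod components, and the closing of the recursion via uniform boundedness (Lemma \ref{le-1}) and a backward Bihari/Osgood argument. The only cosmetic difference is that the paper organizes the double-index limit by first taking $\sup_{p\ge 1}u_{m,p}(t)$ and then $\limsup_{m\to\infty}$, which is the precise form of the ``bookkeeping'' you flag as delicate.
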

\begin{proof}
The proof is divided into two steps.\\
(i) $\textbf{Convergence of Y}$. 
Similar as the proof of Lemma \ref{le-1}, $(Y^i,Z^i)$ has the following representation, i.e., for $i=m,m+p$ with   $m,p\in\mathbb{N}$,
\begin{equation}\begin{split}\label{Yi}
&Y_s^i=y_s^i+K^i_{T}-K^i_s=y_s^i+\bar{K}^{i}_{T-s},\ Z^i_s=z^i_s,\ s\in[0,T],\\
\end{split}   
\end{equation}
where $y_{\cdot}^i$ is the first component of the solution to the MFBSDE with terminal value $\xi$ and generator $f^i$ (see \eqref{fm}). Similar to Lemma $\ref{le-1}$, we know that $\bar{K}^i$ is the solution of Skorokhod problem $\mathbb{SP}_{\bar{l}^i}^{\bar{r}^i}(\bar{s}^i)$, where $\bar{s}^i$, $\bar{l}^i$ and $\bar{r}^i$ are given in \eqref{smlmrm}. 
 Applying Theorem \ref{4-2} and Jensen's inequality, we have 
\begin{equation}\label{ymymp}
     \begin{split}
      &\E\left[\sup_{s\in[t,T]}\left|y_s^m-y_s^{m+p}\right|^2\right]\\
      \leq &P(\lambda,T)\mathbf{E}\left[\int_t^T \left|f^m(s,z^m_s,\mathbf{P}_{z^m_s})-f^{m+p}(s,z^m_s,\mathbf{P}_{z^m_s})\right|^2ds\right]\\
      \leq& P(\lambda,T)\mathbf{E}\left[\int_{t}^{T}\rho\left(\left|Y_s^{m-1}-Y_{s}^{m+p-1}\right|^2+\mathbf{E}\left[\left|Y_s^{m-1}-Y_{s}^{m+p-1}\right|^2\right]\right)ds\right]\\
      \leq& P(\lambda,T)\int_{t}^{T}\rho\left(2\mathbf{E}\left[\left|Y_s^{m-1}-Y_{s}^{m+p-1}\right|^2\right]\right)ds\\
      \leq& P\left(\lambda, T\right)\int_{t}^{T}\rho\left(2\mathbf{E}\left[\sup_{v\in[s,T]}\left|Y_v^{m-1}-Y_{v}^{m+p-1}\right|^2\right]\right)ds.
\end{split}
 \end{equation}
 In view of Proposition \ref{continuity} , we know
\begin{equation*}\begin{split}
 \sup_{r \in[t, T]}\left|\bar{K}^m_{T-r}-\bar{K}^{m+p}_{T-r}\right|
 \leq \frac{C}{c}\sup_{r \in[0, T-t]}\left|\bar{s}^m_r-\bar{s}^{m+p}_r\right|+\frac{1}{c}(\bar{L}_{T-t} \vee \bar{R}_{T-t}),
 \end{split}    
\end{equation*}
where $$\bar{L}_{T-t}=\sup_{(v,x)\in[0, T-t]\times \mathbb{R}}|\bar{l}^m(v,x)-\bar{l}^{m+p}(v,x)|,
\bar{R}_{T-t}=\sup_{(v,x)\in[0, T-t]\times \mathbb{R}}|\bar{r}^m(v,x)-\bar{r}^{m+p}(v,x)|.$$
Recalling that  $\bar{s}^i_r=\mathbf{E}[y^i_{T-r}]$, we have 
\begin{align*}
	\sup_{r\in[0, T-t]}|\bar{s}^m_r-\bar{s}^{m+p}_r|
    \leq \sup_{r \in[t, T]}\mathbf{E}\left[\left|y^m_{r}-y^{m+p}_{r}\right|\right].
	\end{align*}
Applying Assumption \ref{assLR}, we obtain that 
\begin{equation*}
	\bar{L}_{T-t}\leq 2C\sup_{r \in[t, T]}\mathbf{E}\left[\left|y^m_{r}-y^{m+p}_{r}\right|\right],\ \bar{R}_{T-t}\leq 2C\sup_{r \in[t, T]}\mathbf{E}\left[\left|y^m_{r}-y^{m+p}_{r}\right|\right].
\end{equation*}
 The above analysis implies that 
\begin{equation}\label{kmkmp}
	\sup_{r \in[t, T]}\left|\bar{K}^m_{T-r}-\bar{K}^{m+p}_{T-r}\right|
	\leq \frac{3C}{c}\sup_{r \in[t, T]}\mathbf{E}\left[\left|y^m_{r}-y^{m+p}_{r}\right|\right].
\end{equation}
 According to Equations \eqref{Yi}, \eqref{ymymp} and \eqref{kmkmp}, we have
 \begin{equation}\begin{split}\label{Ys}
   \mathbf{E}\left[\sup_{s\in[t,T]}\left|Y^m_s-Y^{m+p}_s\right|^2\right]
   &\leq P\left(\lambda, C,c,T\right)\int_{t}^{T}\rho\left(2\mathbf{E}\left[\sup_{v\in[s,T]}\left|Y_v^{m-1}-Y_{v}^{m+p-1}\right|^2\right]\right)ds.
 \end{split}
 \end{equation}
 
Now, we define $$u_{m,p}(t)=2\mathbf{E}\left[\sup_{s\in[t,T]}\left|Y^m_s-Y^{m+p}_s\right|^2\right],$$ which is uniformly bounded by Lemma $\ref{le-1}$. It follows from \eqref{Ys} that 
$$u_{m,p}(t)
\leq P\left(\lambda, C,c,T\right)\int_{t}^{T}\rho\left(u_{m-1,p}(r)\right)dr.$$Set $u_{m}(t)=\sup_{p\geq1}u_{m,p}(t)$ and $ \alpha(t)=\lim\sup_{m\rightarrow\infty}u_{m}(t)$. It follows that 
$$\alpha(t)\leq P\left(\lambda, C,c,T\right)\int_{t}^{T}\rho\left(\alpha(r)\right)dr,$$
which together with backward Bihari's inequality yields that $\alpha(t)=0,\ t\in[0,T]$. Consequently, we get that 
$$\lim_{m,n\rightarrow \infty}\mathbf{E}\left[\sup_{t\in[0,T]}\left|Y_{t}^m-Y_{t}^{n}\right|^2\right]=0.$$
(ii) $\textbf{Convergence of Z}$. Applying Theorem $\ref{4-2}$ and \eqref{ymymp}, we have  
\begin{equation}
    \begin{split}
    \E\left[\int_{0}^{T}\left|Z_s^m-Z_s^{m+p}\right|^2 ds\right]&\leq P(\lambda,T)\E\left[\int_{0}^{T}\left|f^m(s,Z^m_s,\mathbf{P}_{Z^m_s})-f^{m+p}(s,Z^m_s,\mathbf{P}_{Z^m_s})\right|^2ds\right]\\
    &\leq P\left(\lambda, T\right)\int_{0}^{T}\rho\left(2\mathbf{E}\left[\sup_{v\in[s,T]}\left|Y_v^{m-1}-Y_{v}^{m+p-1}\right|^2\right]\right)ds.
    \end{split}
\end{equation}
Therefore,  we can conclude that 
$$\lim_{m,n\rightarrow \infty}\mathbf{E}\left[\int_{0}^{T}\left|Z_{t}^m-Z_{t}^{n}\right|^2dt\right]=0.$$
The proof is complete.
\end{proof}
\subsection{Proof of Theorem \ref{th-1}}
Now we are in a position to give the proof of our main result. 

\begin{proof}
(i) $\textbf{Existence.}$ In light of Lemma $\ref{le-2}$, there is a pair of progressively measurable processes $(Y,Z)$ such that
\begin{equation*}\label{YZ}
 \lim_{m\rightarrow \infty} \left(\left \|Y^m-Y  \right \|_{\mathcal{S}^2}+\left \|Z^m-Z  \right \|_{\mathcal{H}^2}\right)=0.   
\end{equation*}

By a standard argument, we have
\begin{equation*}\label{fmf}
\lim_{m\rightarrow \infty}\mathbf{E}\left[\int_{0}^{T}\left|f(r,Y^{m-1}_r,\P_{Y^{m-1}_r}, Z_r^m,\P_{Z_r^m})-f(r,Y_r,\P_{Y_r}, Z_r,\P_{Z_r})\right|^2dr\right]=0   
\end{equation*}
Set $K_t:=Y_t-Y_0+\int_{0}^{T}f\left(s, Y_s,\P_{Y_s}, Z_s,\P_{Z_s}\right)ds-\int_{0}^{T}Z_sdB_s$. Consequently, 
\begin{equation*}\label{Km}
\lim_{m \rightarrow \infty}\mathbf{E}\left[\sup _{t \in[0,T]}\left|K_t-K_t^{m}\right|\right]=0.
\end{equation*}
In particular, we have $K_t:=\lim _{m \rightarrow \infty} K_t^{m}=\lim _{m \rightarrow \infty} \mathbf{E}\left[K_t^{m}\right]=\mathbf{E}\left[K_t\right]$ and then $K$ is a continuous function with bounded function. It remains to show the Skorokhod condition holds. The proof  is similar to the one for Theorem 4.4 in \cite{LS}, so we omit it here.

(ii) $\textbf{Uniqueness.}$ The proof is similar with the one for Theorem 2.3 in \cite{CZ}. For readers' convenience, we give a short proof here.  Assume that $(Y',Z',K')$ is another solution to the doubly mean reflected $\text{MFBSDE}$. Then, similar to proof for Lemma $\ref{le-2}$, we could deduce for any $t\in[0,T]$,
\begin{equation*}
    \mathbf{E}\left[\sup_{r\in[t,T]}\left|Y_r-Y'_r\right|^2\right]\leq P(C,c,\lambda,T)\int_{t}^{T}\rho\left(2\mathbf{E}\left[\sup_{v\in[r,T]}\left|Y_v-Y'_v\right|^2\right]\right)dr,
\end{equation*}
which yields that $Y=Y'$. Then both $(Y,Z,K)$ and $(Y',Z',K')$ can be viewed as the solution to the following doubly mean reflected MFBSDE with Lipschitz generator 
\begin{equation*}
\begin{cases}
\bar{Y}_t=\xi+\int_t^T f(s,Y_s,\P_{Y_s},\bar{Z}_s,\P_{\bar{Z}_s})ds-\int_t^T \bar{Z}_s dB_s+\bar{K}_T-\bar{K}_t, \\
\mathbf{E}[L(t,\bar{Y}_t)]\leq 0\leq \mathbf{E}[R(t,\bar{Y}_t)], \\
\bar{K}_t=\bar{K}^R_t-\bar{K}^L_t,\ \bar{K}^R,\bar{K}^L\in I[0,T], \\
\int_0^T \mathbf{E}[R(t,\bar{Y}_t)]d\bar{K}_t^R=\int_0^T \mathbf{E}[L(t,\bar{Y}_t)]d\bar{K}^L_t=0.
\end{cases}
\end{equation*}
According to Theorem 3.5 in \cite{LS}, we obtain that $(Z,K)=(Z',K')$. The proof is complete.
\end{proof}

\section*{Acknowledgement}
    This work was supported  by the National Natural Science Foundation of China (No. 12301178), the Natural Science Foundation of Shandong Province for Excellent Young Scientists Fund Program (Overseas) (No. 2023HWYQ-049), the Fundamental Research Funds for the Central Universities and  the Qilu Young Scholars Program of Shandong University. 
\renewcommand\thesection{Appendix A}
\section{\texorpdfstring{$\text{\small A priori estimates for Mean-field BSDEs}$}{A priori estimates for Mean-field BSDEs}}\label{append:A}
\renewcommand\thesection{A}

In this section, we consider the following MFBSDEs:
\begin{equation}\label{MFBSDE}
Y_t=\xi+\int_t^T f(s,Y_s,\P_{Y_s},Z_s,\P_{Z_s})ds-\int_t^T Z_s dB_s,  
\end{equation}
where the generator $f$ and the terminal value $\xi$ satisfy the following conditions.

\begin{assumption}\label{assfL}
\begin{itemize}
\item[(i)] The terminal value $\xi\in\mathcal{L}^2(\mathcal{F}_T)$.
\item[(ii)]  For any $t\in[0,T]$, $ y_1,y_2\in \mathbb{R}$, $\mu_1, \mu_2\in\mathcal{P}_{1}\left(\mathbb{R}\right)$,  $z_1,z_2\in \mathbb{R}^d$, $\nu_1,\nu_2\in\mathcal{P}_{1}\left(\mathbb{R}^d\right)$, there exists a constant $\lambda>0$ such that $\mathbf{E}\left[\int_{0}^{T}\left|f(t,0,\delta_0,0,\delta_0)\right|^2dt\right]<\infty$ and $\mathbf{P}$-$a.s.$, 
\begin{equation*}
 \left|f(t,y_1,\mu_1,z_1,\nu_1)-f(t,y_2,\mu_2,z_2,\nu_2)\right|
 \leq \lambda\left(\left|y_1-y_2\right|+d_1\left(\mu_1,\mu_2\right)+\left|z_1-z_2\right|+d_1\left(\nu_1,\nu_2\right)\right).
 \end{equation*}
\end{itemize}
\end{assumption}


Motivated by the proof of Proposition 2.2 in \cite{LLZ}, we have the following a priori estimates for the solutions to MFBSDE \eqref{MFBSDE}.
\begin{theorem}\label{4-2}
 For $i=1,2$, assume that $(\xi^i,f^i)$ satisfy Assumption \ref{assfL} and $(Y^i,Z^i)\in\mathcal{S}^2\times\mathcal{H}^2$ are the solution to MFBSDE \eqref{MFBSDE} with parameters $(\xi^i,f^i)$. Then, for any $ t\in[0,T]$, we have
 \begin{equation*}
  \E\left[\sup_{s\in[t,T]}\left|\Delta Y_s\right|^2+\int_{t}^{T}\left|\Delta Z_s\right|^2 ds\right]\leq P(\lambda,T)\E\left[\left|\Delta\xi\right|^2+\int_{t}^{T}\left|\Delta f(s,Y_s^1,\P_{Y_s^1},Z_s^1,\P_{Z_s^1})\right|^2ds\right],       
 \end{equation*}
 where $$\Delta Y:=Y^1-Y^2,\ \Delta Z:=Z^1-Z^2,\ \Delta \xi:=\xi^1-\xi^2,\ \Delta f:=f^1-f^2$$
\end{theorem}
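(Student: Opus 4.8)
# Proof Proposal for Theorem \ref{4-2}

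The plan is to adapt the standard BSDE stability argument (via Itô's formula applied to $|\Delta Y_s|^2$) to the mean-field setting, keeping careful track of the distributional terms $d_1(\P_{Y^1_s}, \P_{Y^2_s})$ and $d_1(\P_{Z^1_s}, \P_{Z^2_s})$ that arise from the Lipschitz condition in Assumption \ref{assfL}. First I would write the equation satisfied by $\Delta Y$, namely $\Delta Y_t = \Delta\xi + \int_t^T \Delta g(s)\,ds - \int_t^T \Delta Z_s\,dB_s$, where the driver increment splits as $\Delta g(s) = \big(f^1(s,Y^1_s,\P_{Y^1_s},Z^1_s,\P_{Z^1_s}) - f^2(s,Y^1_s,\P_{Y^1_s},Z^1_s,\P_{Z^1_s})\big) + \big(f^2(s,Y^1_s,\ldots) - f^2(s,Y^2_s,\ldots)\big)$. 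The first bracket is exactly $\Delta f$ evaluated along the first solution (the source term appearing on the right-hand side of the estimate), and the second bracket is controlled by the Lipschitz property of $f^2$.

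Next I would apply Itô's formula to $|\Delta Y_s|^2$ on $[t,T]$ and take expectations, producing
\begin{equation*}
\E[|\Delta Y_t|^2] + \E\Big[\int_t^T |\Delta Z_s|^2\,ds\Big] = \E[|\Delta\xi|^2] + 2\,\E\Big[\int_t^T \Delta Y_s \cdot \Delta g(s)\,ds\Big].
\end{equation*}
The key technical point specific to the mean-field case is bounding the cross term. Using the Lipschitz estimate, $|\Delta g(s)|$ is dominated by $|\Delta f(s,Y^1_s,\P_{Y^1_s},Z^1_s,\P_{Z^1_s})| + \lambda\big(|\Delta Y_s| + d_1(\P_{Y^1_s},\P_{Y^2_s}) + |\Delta Z_s| + d_1(\P_{Z^1_s},\P_{Z^2_s})\big)$. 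The crucial observation is that the Wasserstein-$1$ distance satisfies $d_1(\P_{Y^1_s},\P_{Y^2_s}) \leq \E[|\Delta Y_s|]^{} \leq \E[|\Delta Y_s|^2]^{1/2}$, and likewise for the $Z$-marginals, since $\P_{\Delta Y_s}$ couples the two laws. After applying Young's inequality to each product (with a small weight on the $|\Delta Z_s|^2$ term so it can be absorbed by the left-hand side), the distributional terms collapse into expectations of $|\Delta Y_s|^2$ and $|\Delta Z_s|^2$, which is what makes the mean-field estimate close in the same form as the classical one.

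From there I would obtain, after absorbing the $\Delta Z$ contribution, an inequality of the form $\E[|\Delta Y_t|^2] \leq P(\lambda,T)\,\E[|\Delta\xi|^2 + \int_t^T|\Delta f|^2\,ds] + P(\lambda)\int_t^T \E[|\Delta Y_s|^2]\,ds$, and then Gronwall's lemma (backward in time) removes the integral term and yields the bound on $\sup_s \E[|\Delta Y_s|^2]$; feeding this back recovers the $\int_t^T|\Delta Z_s|^2\,ds$ estimate. The final step upgrades the pointwise-in-time bound $\E[|\Delta Y_s|^2]$ to the $\sup$-norm bound $\E[\sup_{s}|\Delta Y_s|^2]$ via the Burkholder–Davis–Gundy inequality applied to the martingale part $\int \Delta Z_s\,dB_s$. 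I expect the main obstacle to be the bookkeeping around the distributional terms: one must be careful that $d_1(\P_{Y^1_s},\P_{Y^2_s})$ is bounded by $\E[|\Delta Y_s|^2]^{1/2}$ rather than by the pathwise $|\Delta Y_s|$, so the deterministic coupling trick only works after taking expectations, and the Young's inequality weights must be chosen uniformly in $s$ so the constant $P(\lambda,T)$ is genuinely independent of the time origin $t$.
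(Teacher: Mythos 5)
Your argument is correct, and it closes the estimate, but it follows a different technical route from the paper. You apply It\^o's formula to $|\Delta Y_s|^2$, absorb the $|\Delta Z_s|^2$ contributions from the cross term via Young's inequality with small weights (valid uniformly on all of $[t,T]$), and then run a single backward Gronwall argument, finishing with the Burkholder--Davis--Gundy inequality for the supremum. The paper never invokes It\^o's formula: it squares the conditional-expectation representation of $\Delta Y_t$ directly (using the It\^o isometry and Cauchy--Schwarz, which produces a factor $(T-t)$ in front of the driver term), and can therefore only absorb the $\E_t[\int|\Delta Z_s|^2\,ds]$ term when the interval length $\delta$ satisfies $32\delta\lambda^2<1$; it then iterates the local estimate over $N$ subintervals with $N\delta\geq T$ and upgrades to the supremum via Doob's maximal inequality applied to the conditional-expectation martingale. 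Both proofs handle the mean-field terms identically, through the coupling bound $d_1(\P_{Y^1_s},\P_{Y^2_s})\leq\E[|\Delta Y_s|]\leq\E[|\Delta Y_s|^2]^{1/2}$ (and likewise for $Z$), which, as you correctly emphasize, is only available after taking expectations since $d_1$ is deterministic. Your version buys a cleaner one-pass argument with no time subdivision; the paper's version, following \cite{LLZ}, keeps everything at the level of conditional expectations throughout, which is what lets it extract the conditional form of the estimate (and hence the $\sup$ bound) without BDG. Either way the constant depends only on $\lambda$ and $T$, as required.
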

\begin{proof}
 According to \eqref{MFBSDE} and the Ito isometry, for any $t\in[0,T]$, we have,
 \begin{equation}
     \begin{split}\label{4-3}
        &\E_t\left[\left|\Delta Y_t\right|^2+\int_{t}^{T}\left|\Delta Z_s\right|^2 ds\right]\\
        \leq& 2\E_t\left[\left|\Delta\xi\right|^2+(T-t)\int_{t}^{T}\left| f^1(s,Y_s^1,\P_{Y_s^1},Z_s^1,\P_{Z_s^1})-f^2(s,Y_s^2,\P_{Y_s^2},Z_s^2,\P_{Z_s^2})\right|^2ds\right]\\
        \leq& 2\E_t\left[\left|\Delta\xi\right|^2+2(T-t)\int_{t}^{T}\left|\Delta f(s,Y_s^1,\P_{Y_s^1},Z_s^1,\P_{Z_s^1})\right|^2ds\right]\\
        &+4(T-t)\E_t\left[\int_{t}^{T}\left| f^2(s,Y_s^1,\P_{Y_s^1},Z_s^1,\P_{Z_s^1})-f^2(s,Y_s^2,\P_{Y_s^2},Z_s^2,\P_{Z_s^2})\right|^2ds\right]\\
         \leq& 2\E_t\left[\left|\Delta\xi\right|^2+2(T-t)\int_{t}^{T}\left|\Delta f(s,Y_s^1,\P_{Y_s^1},Z_s^1,\P_{Z_s^1})\right|^2ds\right]\\
        &+16\lambda^2(T-t)\E_t\left[\int_{t}^{T}\left|\Delta Y_s\right|^2+\E\left[\left|\Delta Y_s\right|^2\right]+ \left|\Delta Z_s\right|^2+ \E\left[\left|\Delta Z_s\right|^2\right]ds\right].
     \end{split}
 \end{equation}
 Taking expectations on both sides of \eqref{4-3}, we have 
 \begin{equation*}
     \begin{split}\label{4-4}
        \E\left[\left|\Delta Y_t\right|^2+\int_{t}^{T}\left|\Delta Z_s\right|^2 ds\right]
         \leq& 2\E\left[\left|\Delta\xi\right|^2+2(T-t)\int_{t}^{T}\left|\Delta f(s,Y_s^1,\P_{Y_s^1},Z_s^1,\P_{Z_s^1})\right|^2ds\right]\\
        &+32\lambda^2(T-t)\E\left[\int_{t}^{T}\left|\Delta Y_s\right|^2+\left|\Delta Z_s\right|^2ds\right].
     \end{split}
 \end{equation*}
 Then, for $\delta>0$ small enough such that $32\delta \lambda^2<1$, it follows from Gronwall's inequality that, for any $t\in[T-\delta,T]$,
 \begin{equation}
     \begin{split}\label{4-5}
        &\E\left[\left|\Delta Y_t\right|^2+\int_{t}^{T}\left|\Delta Z_s\right|^2 ds\right]\leq P(\lambda,T)\E\left[\left|\Delta\xi\right|^2+\int_{t}^{T}\left|\Delta f(s,Y_s^1,\P_{Y_s^1},Z_s^1,\P_{Z_s^1})\right|^2ds\right].
     \end{split}
 \end{equation}
 Therefore, for $t\in [T-\delta,T]$, applying Gronwall's inequality and combining \eqref{4-3} and \eqref{4-5}, we get 
  \begin{equation}
     \begin{split}\label{4-6}
        \left|\Delta Y_t\right|^2+\E_t\left[\int_{t}^{T}\left|\Delta Z_s\right|^2 ds\right]
         &\leq P(\lambda,T)\E\left[\left|\Delta\xi\right|^2+\int_{t}^{T}\left|\Delta f(s,Y_s^1,\P_{Y_s^1},Z_s^1,\P_{Z_s^1})\right|^2ds\right]\\
         &+ P(\lambda,T)\E_t\left[\left|\Delta\xi\right|^2+\int_{t}^{T}\left|\Delta f(S,Y_s^1,\P_{Y_s^1},Z_s^1,\P_{Z_s^1})\right|^2ds\right].
     \end{split}
 \end{equation}
 Consequently, using Doob's inequality, we obtain that 
 \begin{equation*}
     \begin{split}\label{4-7}
        \E_t\left[\sup_{s\in[t,T]}\left|\Delta Y_s\right|^2\right]
         \leq& P(\lambda,T)\E\left[\left|\Delta\xi\right|^2+\int_{t}^{T}\left|\Delta f(s,Y_s^1,\P_{Y_s^1},Z_s^1,\P_{Z_s^1})\right|^2ds\right]\\
         &+ P(\lambda,T)\E_t\left[\sup_{s\in[t,T]}\E_s\left[\left|\Delta\xi\right|^2+\int_{t}^{T}\left|\Delta f(s,Y_s^1,\P_{Y_s^1},Z_s^1,\P_{Z_s^1})\right|^2ds\right]\right]\\
         \leq& P(\lambda,T)\E\left[\left|\Delta\xi\right|^2+\int_{t}^{T}\left|\Delta f(s,Y_s^1,\P_{Y_s^1},Z_s^1,\P_{Z_s^1})\right|^2ds\right]\\
         &+ P(\lambda,T)\E_t\left[ \left|\Delta\xi\right|^2+\int_{t}^{T}\left|\Delta f(s,Y_s^1,\P_{Y_s^1},Z_s^1,\P_{Z_s^1})\right|^2ds\right],
     \end{split}
 \end{equation*}
 which together with \eqref{4-6} yields that 
 \begin{equation}
     \begin{split}\label{4-8}
        \E_t\left[\sup_{s\in[t,T]}\left|\Delta Y_s\right|^2+\int_{t}^{T}\left|\Delta Z_s\right|^2 ds\right]
         \leq& P(\lambda,T)\E\left[\left|\Delta\xi\right|^2+\int_{t}^{T}\left|\Delta f_s(Y_s^1,\P_{Y_s^1},Z_s^1,\P_{Z_s^1})\right|^2ds\right]\\
         +& P(\lambda,T)\E_t\left[\left|\Delta\xi\right|^2+\int_{t}^{T}\left|\Delta f(s,Y_s^1,\P_{Y_s^1},Z_s^1,\P_{Z_s^1})\right|^2ds\right].
     \end{split}
 \end{equation}
 Next, for $t\in[T-2\delta,T-\delta]$, by a similar analysis as \eqref{4-8}, we obtain that 
\begin{equation*}
     \begin{split}\label{4-9}
        &\E_t\left[\sup_{s\in[t,T-\delta]}\left|\Delta Y_s\right|^2+\int_{t}^{T-\delta}\left|\Delta Z_s\right|^2 ds\right]\\
         \leq& P(\lambda,T)\E\left[\left|\Delta Y_{T-\delta}\right|^2+\int_{t}^{T-\delta}\left|\Delta f(s,Y_s^1,\P_{Y_s^1},Z_s^1,\P_{Z_s^1})\right|^2ds\right]\\
         &+ P(\lambda,T)\E_t\left[\left|\Delta Y_{T-\delta}\right|^2+\int_{t}^{T-\delta}\left|\Delta f(s,Y_s^1,\P_{Y_s^1},Z_s^1,\P_{Z_s^1})\right|^2ds\right],
     \end{split}
 \end{equation*}
 which, together with \eqref{4-8} yields that, for any $t\in[T-2\delta, T]$, we have
 \begin{equation*}
     \begin{split}\label{4-10}
        \E_t\left[\sup_{s\in[t,T]}\left|\Delta Y_s\right|^2+\int_{t}^{T}\left|\Delta Z_s\right|^2 ds\right]
         \leq& P(\lambda,T)\E\left[\left|\Delta\xi\right|^2+\int_{t}^{T}\left|\Delta f(s,Y_s^1,\P_{Y_s^1},Z_s^1,\P_{Z_s^1})\right|^2ds\right]\\
         +& P(\lambda,T)\E_t\left[\left|\Delta\xi\right|^2+\int_{t}^{T}\left|\Delta f(s,Y_s^1,\P_{Y_s^1},Z_s^1,\P_{Z_s^1})\right|^2ds\right].\\
     \end{split}
 \end{equation*}
 Iterating the above argument $N$ times, with $N\delta\geq T$, we have 
 \begin{equation*}
     \begin{split}\label{4-11}
        \E_t\left[\sup_{s\in[t,T]}\left|\Delta Y_s\right|^2+\int_{t}^{T}\left|\Delta Z_s\right|^2 ds\right]
         \leq& P(\lambda,T)\E\left[\left|\Delta\xi\right|^2+\int_{t}^{T}\left|\Delta f(s,Y_s^1,\P_{Y_s^1},Z_s^1,\P_{Z_s^1})\right|^2ds\right]\\
         +& P(\lambda,T)\E_t\left[\left|\Delta\xi\right|^2+\int_{t}^{T}\left|\Delta f(s,Y_s^1,\P_{Y_s^1},Z_s^1,\P_{Z_s^1})\right|^2ds\right].
     \end{split}
 \end{equation*}
 The proof is complete.\end{proof}

\end{document}